\newtheorem{theorem}{Theorem}[section]
\newtheorem{lemma}[theorem]{Lemma}
\theoremstyle{definition}
\newtheorem{definition}[theorem]{Definition}
\newtheorem{example}[theorem]{Example}
\newtheorem{prop}[theorem]{Proposition}
\newtheorem{cor}[theorem]{Corollary}
\theoremstyle{remark}
\newtheorem{remark}[theorem]{Remark}
\numberwithin{equation}{section}
\begin{document}

\title{On distribution and almost convergence of bounded sequences}

\author{Chao You}
\address{Department of Mathematics $\&$ The Academy of Fundamental and Interdisciplinary Science\\Harbin Institute of Technology\\
Harbin 150001, Heilongjiang, People's Republic of China}
\email{hityou1982@gmail.com}

\author{Bao Qi Feng}
\address{Department of Mathematical Sciences\\Kent State University, Tuscarawas\\New Philadelphia, OH 44663, USA}
\email{bfeng@kent.edu}

\subjclass[2000]{Primary 40G05, 46A35, 54A20; Secondary 11K36}



\keywords{Banach limit, almost convergent sequence, properly
distributed sequence, simply distributed sequence}

\begin{abstract}
In this paper, we give the concepts of properly distributed and
simply distributed sequences, and prove that they are almost
convergent. Basing on these, we review the work of Feng and Li
[Feng, B. Q. and Li, J. L., Some estimations of Banach limits, J.
Math. Anal. Appl. 323(2006) No. 1, 481-496. MR2262220 46B45
(46A45).], which is shown to be a special case of our generalized
theory.
\end{abstract}

\maketitle

\section{preliminary and background}
Let $l^{\infty}$ be the Banach space of bounded sequences of real
numbers $x:=\{x(n)\}_{n=1}^{\infty}$ with norm
$\|x\|_{\infty}=\sup|x(n)|$. As an application of Hahn-Banach
theorem, a \emph{Banach limit} $L$ is a bounded linear functional on
$l^{\infty}$, which satisfies the following properties:

(a)If $x:=\{x(n)\}_{n=1}^{\infty}\in l^{\infty}$ and $x(n)\geq 0$,
then $L(x)\geq 0$;

(b)If $x:=\{x(n)\}_{n=1}^{\infty}\in l^{\infty}$ and
$Tx=\{x(2),x(3),\ldots\}$, then $L(x)=L(Tx)$, where $T$ is the
\emph{translation operator};

(c)$L(1)=1$, where $1:=\{1,1,\ldots\}$;

(d)$\|L\|=1$;

(e)If $x:=\{x(n)\}_{n=1}^{\infty}\in c$, then
$L(x)=\lim_{n\rightarrow\infty}x(n)$, where $c$ is the Banach
subspace of $l^{\infty}$ consisting of convergent sequences.

Since the Hahn-Banach norm-preserving extension is not unique, there
must be many Banach limits in the dual space of $l^{\infty}$, and
usually different Banach limits have different values at the same
element in $l^{\infty}$. However, there indeed exist sequences whose
values of all Banach limits are the same. Condition (e) is a trivial
example. Besides that, there also exist nonconvergent sequences
satisfying this property, for such examples please see \cite{Feng1}
and \cite{Feng2}. In \cite{Lorentz}, G. G. Lorentz called a sequence
$x:=\{x(n)\}_{n=1}^{\infty}$ \emph{almost convergent}, if all Banach
limits of $x$, $L(x)$, are the same. In his paper, Lorentz proved
the following criterion for almost convergent sequences:

\begin{theorem}
A sequence $x:=\{x(n)\}_{n=1}^{\infty}\in l^{\infty}$ is almost
convergent if and only if
$$
\lim_{n\rightarrow\infty}\frac{1}{n}\sum_{t=i}^{i+n-1}x(t)=L(x)
$$
uniformly in $i$.
\end{theorem}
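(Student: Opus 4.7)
The plan is to prove the two directions separately, with sufficiency following easily from continuity and translation invariance, while necessity requires a Hahn–Banach construction of a ``bad'' Banach limit that would contradict almost convergence.

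For sufficiency, assume $\sigma_{n,i}(x):=\frac{1}{n}\sum_{t=i}^{i+n-1}x(t)$ converges to some $\ell$ uniformly in $i$, and let $L$ be any Banach limit. By translation invariance (property (b)) applied inductively, $L(T^{k}x)=L(x)$ for all $k\geq 0$, so
\[
L(x)=\frac{1}{n}\sum_{k=0}^{n-1}L(T^{k}x)=L\Bigl(\tfrac{1}{n}\sum_{k=0}^{n-1}T^{k}x\Bigr)
\]
for every $n$. The $i$-th coordinate of $\frac{1}{n}\sum_{k=0}^{n-1}T^{k}x$ is exactly $\sigma_{n,i}(x)$, so uniform convergence to $\ell$ translates into $\bigl\|\tfrac{1}{n}\sum_{k=0}^{n-1}T^{k}x-\ell\cdot 1\bigr\|_{\infty}\to 0$. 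Passing to the limit using continuity of $L$ and $L(1)=1$ yields $L(x)=\ell$, independently of $L$.

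For necessity, suppose every Banach limit of $x$ equals $\ell$ but that the uniform convergence fails; after replacing $x$ by $-x$ if necessary, there exist $\varepsilon>0$ and sequences $n_{k}\uparrow\infty$, $i_{k}\geq 1$ with $\sigma_{n_{k},i_{k}}(x)\geq\ell+\varepsilon$. Define on $l^{\infty}$ the functional
\[
p(y):=\lim_{n\to\infty}\sup_{i\geq 1}\sigma_{n,i}(y).
\]
The key technical step is the existence of this limit: setting $s_{n}(y):=n\sup_{i}\sigma_{n,i}(y)$, splitting $\sum_{t=i}^{i+n+m-1}y(t)$ at position $i+n-1$ yields $s_{n+m}(y)\leq s_{n}(y)+s_{m}(y)$, so Fekete's subadditive lemma delivers $s_{n}(y)/n\to\inf_{n}s_{n}(y)/n$. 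One then verifies that $p$ is sublinear, bounded by $\|\cdot\|_{\infty}$, satisfies $p(1)=1$, and, crucially, $p(\pm(y-Ty))=0$ via the telescoping identity $\sigma_{n,i}(y-Ty)=(y(i)-y(i+n))/n$; by choice of $\varepsilon$ we also have $p(x)\geq\ell+\varepsilon$.

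Finally, extend the functional $\alpha x\mapsto\alpha\,p(x)$ from $\mathbb{R}x$ to a linear $L$ on $l^{\infty}$ with $L\leq p$ (the dominating inequality on $\mathbb{R}x$ uses $0=p(0)\leq p(y)+p(-y)$). Then $p(\pm(y-Ty))=0$ gives $L(y)=L(Ty)$; for $y\geq 0$, $p(-y)\leq 0$ yields $L(y)\geq 0$; and $L(1)=p(1)=1$ combined with $|L(y)|\leq\max(p(y),p(-y))\leq\|y\|_{\infty}$ gives $\|L\|=1$. Thus $L$ is a Banach limit, yet $L(x)=p(x)\geq\ell+\varepsilon>\ell$, contradicting the hypothesis. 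The main obstacle is the Fekete-style subadditivity argument making $p$ well-defined, together with the translation estimate $p(y-Ty)=0$ that lets the Hahn–Banach extension inherit property (b).
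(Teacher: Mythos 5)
Your argument is correct. Note first that the paper does not prove this statement at all: it is quoted as Lorentz's theorem and used as a black box, so there is no in-paper proof to compare against; what you have written is essentially the classical Lorentz--Sucheston argument, and it is complete. The sufficiency direction via $L(x)=L\bigl(\frac{1}{n}\sum_{k=0}^{n-1}T^{k}x\bigr)$ and the identification of the $i$-th coordinate with $\sigma_{n,i}(x)$ is exactly right. In the necessity direction, the two load-bearing points both check out: the subadditivity $s_{n+m}(y)\leq s_{n}(y)+s_{m}(y)$ obtained by splitting the block sum, together with the lower bound $s_{n}(y)/n\geq-\|y\|_{\infty}$, makes Fekete applicable and $p$ well defined and sublinear; and the telescoping identity $\sigma_{n,i}(y-Ty)=(y(i)-y(i+n))/n$ forces $p(\pm(y-Ty))=0$, which is what transfers translation invariance to the Hahn--Banach extension. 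Two small points you may want to make explicit if you write this up: (i) property (e) of a Banach limit is not verified, but it follows from $L\leq p$ since $p(y)=\lim y(n)$ and $p(-y)=-\lim y(n)$ for convergent $y$; (ii) in the reduction ``replace $x$ by $-x$'' you should note that the hypothesis is preserved because every Banach limit of $-x$ equals $-\ell$ by linearity. Neither is a gap, just bookkeeping.
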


There is no doubt that Lorentz' theorem is a landmark in Banach
limit theory, which in theory points out all the almost convergent
sequences. Recently, basing on Lorentz \cite{Lorentz} and Sucheston
\cite{Sucheston}, Feng B. Q. and Li J. L. gave another way
\cite{Feng1} to find the value of Banach limits of $x$, where $x$ is
an element of the space of almost convergent sequences with some
properties. In this paper, we will make a remark on the concept of
essential subsequence (Definition 2, \cite{Feng1}), then cite
Theorem 4(\cite{Feng1}) to develop our theory, and at last use our
theory to review two main results in \cite{Feng1}, in the bid to
include \cite{Feng1} into our framework and show that we have
genuinely done a work of generalization in theory. Thus, we'd better
make a short introduction to the main results of \cite{Feng1} first,
making the notations and terminologies available.

\begin{definition}[Definition 1, \cite{Feng1}]
A real number $a$ is said to be a sub-limit of the sequence
$x:=\{x(n)\}_{n=1}^{\infty}\in l^{\infty}$, if there exists a
subsequence $\{x(n_k)\}_{k=1}^{\infty}$ of $x$ with limit $a$. The
set of all sub-limits of $x$ is denoted by $S(x)$ and the set of all
limit points of $S(x)$ is denoted by $S'(x)$.
\end{definition}

\begin{definition}[Definition 3, \cite{Feng1}]\label{weight}
Let $x:=\{x(n)\}_{n=1}^{\infty}\in l^{\infty}$, and let
$\{x(n_k)\}_{k=1}^{\infty}$ be a subsequence of $x$. Define
$$
w^u(\{x(n_k)\})=\limsup_{n\rightarrow\infty}\left(\sup_i\frac{A(\{k:i\leq
n_k\leq i+n-1\})}{n}\right)
$$
and
$$
w_l(\{x(n_k)\})=\liminf_{n\rightarrow\infty}\left(\inf_i\frac{A(\{k:i\leq
n_k\leq i+n-1\})}{n}\right),
$$
where $A(E)$ is the cardinality of the set $E$. $w^u(\{x(n_k)\})$
and $w_l(\{x(n_k)\})$ are called the upper and lower weights of the
subsequence $\{x(n_k)\}_{k=1}^{\infty}$ respectively. If
$w^u(\{x(n_k)\})=w_l(\{x(n_k)\})$, then the subsequence
$\{x(n_k)\}_{k=1}^{\infty}$ is said to be weightable and the weight
of $\{x(n_k)\}_{k=1}^{\infty}$ is denoted by $w(\{x(n_k)\})$, and
$w(\{x(n_k)\})=w^u(\{x(n_k)\})=w_l(\{x(n_k)\})$.
\end{definition}

\begin{remark}
It should be emphasized that our Definition \ref{weight} is slightly
different from Definition 3(\cite{Feng1}), with
$\lim_{n\rightarrow\infty}$ there replaced by
$\limsup_{n\rightarrow\infty}$ and $\liminf_{n\rightarrow\infty}$
for $w^u(\cdot)$ and $w_l(\cdot)$ respectively. Such expression is
more accurate, since there is no reason to guarantee the existence
of $\lim_{n\rightarrow\infty}$.
\end{remark}

\begin{definition}[Definition 2, \cite{Feng1}]
Suppose $a\in S(x)$ for some $x:=\{x(n)\}_{n=1}^{\infty}\in
l^{\infty}$. A subsequence $\{x(n_k)\}_{k=1}^{\infty}$ of $x$ is
called an essential subsequence of $a$ if it converges to $a$, and
for any subsequence $\{x(m_t)\}_{t=1}^{\infty}$ of $x$ with limit
$a$, except finite entries, all its entries are entries of
$\{x(n_k)\}_{k=1}^{\infty}$.
\end{definition}

\begin{theorem}[Theorem 1, \cite{Feng1}]\label{essential subsequence}
Let $x:=\{x(n)\}_{n=1}^{\infty}\in l^{\infty}$. Suppose $a\in S(x)$.
Let $\{x(n_k)\}_{k=1}^{\infty}$ and $\{x(m_t)\}_{t=1}^{\infty}$ be
two essential subsequences of $a$. Then\\
$w^u(\{x(n_k)\})=w^u(\{x(m_t)\})$ and
$w_l(\{x(n_k)\})=w_l(\{x(m_t)\})$.
\end{theorem}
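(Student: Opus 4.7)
The plan is to exploit the fact that being an essential subsequence is a very rigid property: if $\{x(n_k)\}$ and $\{x(m_t)\}$ are both essential subsequences of $a$, then their index sets must agree off a finite set, and the weight functionals $w^u$ and $w_l$ are insensitive to finite perturbations of the index set.

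First I would unpack the definition of essential subsequence symmetrically. Applying the defining property of $\{x(n_k)\}$ to the convergent subsequence $\{x(m_t)\}$ shows that all but finitely many indices in $\{m_t : t \geq 1\}$ lie in $\{n_k : k \geq 1\}$. Swapping the roles of the two essential subsequences gives the reverse inclusion up to finitely many indices. Hence, setting $N := \{n_k : k \geq 1\}$ and $M := \{m_t : t \geq 1\}$ as subsets of $\mathbb{N}$, the symmetric difference $N \triangle M$ is finite; call its cardinality $D$.

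Next I would show that weights are stable under finite symmetric-difference perturbations. For every starting index $i$ and every length $n$, the elementary counting estimate
\[
\bigl| A(N \cap [i, i+n-1]) - A(M \cap [i, i+n-1]) \bigr| \leq A(N \triangle M) = D
\]
holds uniformly in $i$ and $n$. Dividing by $n$ and taking the supremum (respectively infimum) over $i$ yields
\[
\left| \sup_i \frac{A(\{k : i \leq n_k \leq i+n-1\})}{n} - \sup_i \frac{A(\{t : i \leq m_t \leq i+n-1\})}{n} \right| \leq \frac{D}{n},
\]
and the same bound with $\inf_i$ in place of $\sup_i$.

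Finally I would let $n \to \infty$. Since $D/n \to 0$, taking $\limsup$ in the $\sup_i$ inequality gives $w^u(\{x(n_k)\}) = w^u(\{x(m_t)\})$, and taking $\liminf$ in the $\inf_i$ inequality gives $w_l(\{x(n_k)\}) = w_l(\{x(m_t)\})$. The only genuinely delicate point is the first step: one has to commit to the interpretation that ``entries'' in the definition of essential subsequence refer to indexed terms (so that essential subsequences are unique up to a finite set of indices), since otherwise the weights, which are defined from indices, are not obviously invariants of the value-sequence alone. Once this reading is fixed, the rest of the proof is the uniform counting bound above.
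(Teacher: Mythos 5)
Your proof is correct: the paper itself only cites this result from [Feng1] without reproducing a proof, and your argument --- that two essential subsequences of $a$ have index sets with finite symmetric difference, combined with the uniform bound $\bigl|A(N\cap[i,i+n-1])-A(M\cap[i,i+n-1])\bigr|\leq A(N\triangle M)$ and the stability of $\sup_i$, $\inf_i$, $\limsup_n$, $\liminf_n$ under an $O(1/n)$ perturbation --- is the standard and intended one. You are also right to flag the reading of ``entries'' as indexed terms; under the value-only reading the statement would fail (e.g.\ for a constant sequence), and the index reading is the one consistent with how the weights are defined and with how essential subsequences are constructed in Proposition~\ref{correction}.
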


Theorem \ref{essential subsequence} points out that, for $a\in
S(x)$, all essential subsequences of $a$ have the same upper weight
and lower weight, respectively. They are called the \emph{upper} and
\emph{lower weights} of $a$ in the sequence $x$, and denoted by
$w^u(a)$ and $w_l(a)$, respectively. The \emph{weight} of $a$ in the
sequence $x$ is denoted by $w(a)$, if $w^u(a)=w_l(a)$.

We remark that not every sub-limit $a\in S(x)$ has an essential
subsequence. The following proposition shows that this happens only
when $a$ is an isolated sub-limit of $x$. This is an important
correction to \cite{Feng1}, and consideration on this problem
directly leads to our present work.

\begin{prop}\label{correction}\footnote{Special thanks goes to
Prof. J. L. Li for discussion with him on this proposition. In fact,
it was him that first pointed out this proposition and provided a
proof for the sufficient condition.} Let
$x:=\{x(n)\}_{n=1}^{\infty}\in l^{\infty}$ and suppose $a\in S(x)$.
$a$ has an essential subsequence if and only if $a$ is an isolated
sub-limit of $x$.
\end{prop}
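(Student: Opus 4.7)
The plan is to prove both directions directly: for sufficiency, I would build an explicit essential subsequence from an isolation neighborhood of $a$; for necessity, I would argue by contradiction, manufacturing a subsequence of $x$ that converges to $a$ but avoids the indices of the supposed essential subsequence.

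For sufficiency, suppose $a$ is isolated in $S(x)$, so there exists $\epsilon>0$ with $S(x)\cap(a-\epsilon,a+\epsilon)=\{a\}$. I would introduce the (necessarily infinite) index set $N_\epsilon=\{n:\abs{x(n)-a}<\epsilon/2\}$ and consider the subsequence of $x$ indexed, in increasing order, by $N_\epsilon$. Any accumulation point of this bounded subsequence belongs to $S(x)\cap[a-\epsilon/2,a+\epsilon/2]\subseteq\{a\}$, so the subsequence has $a$ as its unique accumulation point and therefore converges to $a$. Essentiality is then automatic: whenever $\{x(m_t)\}$ is a subsequence of $x$ with limit $a$, one eventually has $\abs{x(m_t)-a}<\epsilon/2$, forcing $m_t\in N_\epsilon$, so all but finitely many entries $x(m_t)$ are entries of the subsequence indexed by $N_\epsilon$.

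For necessity, I would assume $\{x(n_k)\}_{k=1}^{\infty}$ is an essential subsequence of $a$, set $N=\{n_k:k\geq 1\}$, and derive a contradiction from the hypothesis that $a$ is not isolated in $S(x)$. Non-isolation yields sub-limits $b_j\in S(x)\setminus\{a\}$ with $b_j\to a$, and for each $j$ a subsequence $\{x(p^{(j)}_i)\}_i$ of $x$ converging to $b_j$. The key observation is that only finitely many indices $p^{(j)}_i$ can lie in $N$, for otherwise one would extract a subsequence of the essential subsequence $\{x(n_k)\}$ converging to $b_j\neq a$, contradicting $x(n_k)\to a$. Consequently I can choose $q_j$ outside $N$ with $\abs{x(q_j)-b_j}<1/j$ and $q_j$ strictly increasing in $j$; the triangle inequality then gives $x(q_j)\to a$, yet no $q_j$ belongs to $N$, contradicting the defining property of the essential subsequence applied to the subsequence $\{x(q_j)\}$.

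The main obstacle is the avoidance step in the necessity argument, i.e.\ ruling out that the subsequences approximating each $b_j$ could be essentially supplied by the index set $N$; this is where the defining convergence of the essential subsequence is used in an essential way. Once this is pinned down, the diagonalization producing the strictly increasing $\{q_j\}$ outside $N$ is routine, since the tail of each $\{p^{(j)}_i\}_i$ in the complement of $N$ remains infinite. The remainder of the argument reduces to elementary real-line topology and unpacking of the definition of ``essential subsequence''.
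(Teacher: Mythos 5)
Your proposal is correct and follows essentially the same route as the paper: sufficiency by taking all terms of $x$ in an isolation neighborhood of $a$ and checking it is an essential subsequence, and necessity by contradiction, manufacturing a subsequence converging to $a$ whose indices avoid the essential subsequence. The only (cosmetic) difference is in the avoidance step: the paper separates the terms near $a$ from those near $a_n$ by disjoint $d_n/3$-balls, whereas you observe that only finitely many indices of the subsequence approximating $b_j$ can lie in the essential index set, since otherwise the essential subsequence would have a subsequence converging to $b_j\neq a$; both justifications are valid.
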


\begin{proof}
If $a$ is an isolated sub-limit of $x$, then there exists
$\varepsilon_0>0$ such that
$(a-\varepsilon_0,a+\varepsilon_0)\bigcap S(x)=\{a\}$. Let
$\{x(n_k)\}$ denote all the terms of $x$ that lying in
$(a-\varepsilon_0,a+\varepsilon_0)$, we will show that $\{x(n_k)\}$
is the desired essential subsequence of $x$. Since $\{x(n_k)\}$ is
infinite and bounded, it must have at least one convergent
subsequence or sub-limit. But $a$ is an isolated sub-limit, hence
$\{x(n_k)\}$ has just one sub-limit, i.e., $a$. That's to say
$\{x(n_k)\}$ is convergent to $a$. For any subsequence $\{x(m_t)\}$
of $x$ that converging to $a$, from the definition of $\{x(n_k)\}$
and convergence of $\{x(m_t)\}$ to $a$, all of the terms of this
subsequence under consideration, except finite number of them, must
be in $\{x(n_k)\}$. So $\{x(n_k)\}$ is an essential subsequence of
$a$.

Conversely, suppose that $a$ has an essential subsequence
$\{x(n_k)\}$. Assume $a$ is not an isolated sub-limit of sequence
$x$, then there exist a sequence of sub-limits $\{a_n\}$ that
converges to $a$. We know, for each $a_n$ from $\{a_n\}$, there is a
subsequence $\{x_n^i\}$ that converges to $a_n$ when $i\rightarrow
\infty$. Without loss of generality, we can assume
$0<d_n=|a-a_n|<1/n$. Then, for each $n$, we can find $y_n$ from
$\{x_n^i\}$ such that $y_n$ doesn't lie in $\{x(n_k)\}$ and
$|y_n-a_n|<1/n$. Actually, this construction is possible. Since $a$
and $a_n$ are distinct with distance $d_n$, then we can find
positive integer $N_1$ and $N_2$ such that, when $k>N_1$, $i>N_2$,
it holds that $|x(n_k)-a|<d_n/3$ and $|x_n^i-a_n|<d_n/3$,
respectively. It is easy to see such $y_n$ can be found and
satisfying $|y_n-a|<d_n<1/n$. Here we have constructed a subsequence
$\{y_n\}$ converging to $a$, but not lying in the essential
subsequence $\{x(n_k)\}$, which leads to a contradiction.
\end{proof}

\begin{remark}
Since in \cite{Feng1} they just considered sequences with isolated
sub-limits, or a little complex case with only one limit point, this
ambiguous treatment of essential subsequences didn't lead to serious
mistakes.
\end{remark}

The following theorem is the most important result of \cite{Feng1},
which will be cited and reviewed later.

\begin{theorem}[Theorem 4, \cite{Feng1}]
Suppose $x:=\{x(n)\}_{n=1}^{\infty}\in l^{\infty}$ and\\
$S(x)=\{a_1,a_2,\ldots,a_m\}$ is a finite set, where $a_i\neq a_j$
if $i\neq j$. Then
\begin{align*}
\sum_{0<a_j\in S(x)}a_jw_l(a_j)+\sum_{0>a_j\in S(x)}a_jw^u(a_j)&\leq
L(x)\\
&\leq \sum_{0<a_j\in S(x)}a_jw^u(a_j)+\sum_{0>a_j\in
S(x)}a_jw_l(a_j).
\end{align*}
If $w(a_j)$ exists for each $j$, then $x$ is almost convergent and
for any Banach limit $L$, $L(x)=\sum_{j=1}^ma_jw(a_j)$.
\end{theorem}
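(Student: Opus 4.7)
The plan is to use the finiteness of $S(x)$ to decompose $x$, up to a finite set, into the essential subsequences at the distinct values $a_j$, and then reduce the value of an arbitrary Banach limit on $x$ to its values on indicator sequences, which are pinched by the weights. Since $S(x)=\{a_1,\dots,a_m\}$ is finite, every $a_j$ is isolated; choosing $\varepsilon_0>0$ small enough that the intervals $I_j=(a_j-\varepsilon_0,a_j+\varepsilon_0)$ are pairwise disjoint, I set $E_j=\{n:x(n)\in I_j\}$. A Bolzano--Weierstrass argument shows that $F:=\mathbb{N}\setminus\bigcup_jE_j$ is finite, since an infinite set of terms of $x$ outside $\bigcup_jI_j$ would, by boundedness, produce a sub-limit outside $S(x)$. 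The construction in Proposition~\ref{correction} then identifies the natural enumeration of $\{x(n):n\in E_j\}$ as the essential subsequence of $a_j$, and Theorem~\ref{essential subsequence} identifies its upper and lower weights with $w^u(a_j)$ and $w_l(a_j)$, i.e.\ the upper and lower uniform densities of $E_j\subset\mathbb{N}$.

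Next, for any Banach limit $L$, I would write
$$x(n)=\sum_{j=1}^m a_j\mathbf{1}_{E_j}(n)+r(n)+x(n)\mathbf{1}_F(n),$$
where $\|r\|_\infty\leq\varepsilon_0$ and $x\,\mathbf{1}_F$ has finite support, so $L(x\,\mathbf{1}_F)=0$. Positivity of $L$ gives $|L(x)-\sum_j a_j L(\mathbf{1}_{E_j})|\leq\varepsilon_0$. Since essential subsequences are unique up to a finite modification, the values $L(\mathbf{1}_{E_j})$ do not depend on the choice of $\varepsilon_0$; letting $\varepsilon_0\downarrow 0$ yields the exact identity $L(x)=\sum_{j=1}^m a_j L(\mathbf{1}_{E_j})$.

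The key technical step is the sandwich $w_l(a_j)\leq L(\mathbf{1}_{E_j})\leq w^u(a_j)$, valid for every Banach limit. I would deduce it from shift-invariance (property (b)): for each $n$, $L(y)=L\bigl(\tfrac{1}{n}\sum_{k=0}^{n-1}T^k y\bigr)$, and positivity forces $L(y)$ to lie between $\inf_i z_n(i)$ and $\sup_i z_n(i)$ with $z_n(i)=\tfrac{1}{n}\sum_{t=i}^{i+n-1}y(t)$; passing to $\liminf$ and $\limsup$ in $n$ with $y=\mathbf{1}_{E_j}$ gives the claim. Splitting $\sum_j a_j L(\mathbf{1}_{E_j})$ according to the sign of $a_j$ and using $w_l(a_j)$ or $w^u(a_j)$ accordingly then produces the stated two-sided inequality on $L(x)$. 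When every $w(a_j)$ exists, the two bounds collapse to the common value $\sum_j a_j w(a_j)$, independent of $L$, which is almost convergence. The main obstacle I anticipate is precisely this sandwich inequality: everything else is bookkeeping around the isolation of the $a_j$'s, but the sandwich is a genuine use of shift-invariance and positivity and is the true heart of the argument.
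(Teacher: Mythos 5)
This theorem is imported verbatim from Feng and Li's paper and is stated here without proof, so there is no internal argument to compare against; judged on its own, your proof is correct and complete, and it follows what is essentially the intended route. The decomposition $x=\sum_j a_j\mathbf{1}_{E_j}+r+x\mathbf{1}_F$ with $\|r\|_\infty\leq\varepsilon_0$ and $F$ finite is sound (the Bolzano--Weierstrass argument for finiteness of $F$ is exactly right), Proposition~\ref{correction} does identify the enumeration of $E_j$ as an essential subsequence so that its upper and lower weights are $w^u(a_j)$ and $w_l(a_j)$ by Theorem~\ref{essential subsequence}, and the sandwich $w_l(a_j)\leq L(\mathbf{1}_{E_j})\leq w^u(a_j)$ via $L(y)=L\bigl(\tfrac{1}{n}\sum_{k=0}^{n-1}T^ky\bigr)$ together with positivity and $L(1)=1$ is the genuine content; note that it yields the stronger bounds $\sup_n\inf_i z_n(i)\leq L(y)\leq\inf_n\sup_i z_n(i)$, which in particular lie between the paper's $\liminf\inf$ and $\limsup\sup$. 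The only step that deserves an explicit word is the claim that $L(\mathbf{1}_{E_j})$ is independent of $\varepsilon_0$: for $\varepsilon_0'<\varepsilon_0$ the difference $E_j\setminus E_j'$ consists of indices $n$ with $x(n)$ in the annulus $I_j\setminus I_j'$, and this set is finite because its closure contains no point of $S(x)$ (take $\varepsilon_0$ small enough that even the closed interval $[a_j-\varepsilon_0,a_j+\varepsilon_0]$ meets $S(x)$ only in $a_j$); with that observed, $\mathbf{1}_{E_j}-\mathbf{1}_{E_j'}$ has finite support and the limit $\varepsilon_0\downarrow 0$ is legitimate. The sign-splitting at the end and the collapse of the two bounds when $w(a_j)$ exists are exactly as they should be.
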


This form of $L(x)=\sum_{j=1}^ma_jw(a_j)$ is much like the
\emph{integration sum} in measure and integration theory, so we ask
the question whether the unique Banach limit value of almost
convergent sequence could be expressed as an integral form? Previous
work shows this is related to the distribution of values appearing
in the sequence. In \cite{Kuipers}, the concept of \emph{uniform
distribution of sequences} was introduced as following: Suppose $x
\in l^{\infty}$ is a $[0,1]$-valued sequence, i.e. $0\leq x(n)\leq
1$ for each $n\in \mathbb{N}$. $x$ is called \emph{uniformly
distributed} if for any $[a,b)\subseteq [0,1]$,
$$\lim_{N\rightarrow\infty}\frac{A(\{n\in\mathbb{N}:x(n)\in [a,b),n\leq
N\})}{N}=b-a.$$ Now we want to generalize the concept of
distribution to cover both the uniform and ununiform cases.

\section{main results}
\begin{definition}\label{properly distributed}
A sequence $x:=\{x(n)\}^{\infty}_{n=1}\in l^{\infty}$ is called
properly distributed if for any Borel subset $S$ of
$[-\|x\|_{\infty},\|x\|_{\infty}]$ it holds that
\begin{align*}
w(x,S)&=\liminf_{n\rightarrow \infty}\frac{A(\{k:x(k+i)\in
S,k=0,1,\ldots,n-1\})}{n}\\
&=\limsup_{n\rightarrow \infty}\frac{A(\{k:x(k+i)\in
S,k=0,1,\ldots,n-1\})}{n}
\end{align*}  exists uniformly in $i\in \mathbb{N}$
and $w(x,S)$ is called the weight of $x$ with respect to $S$.
\end{definition}

If we treat a properly distributed sequence $x$ as a function
defined on $\mathbb{N}$, $x$ is analogous to the measurable function
in real analysis, with $w(x,S)$ corresponding to some measure
$\mu(\{n:x(n)\in S\})$ over $\mathbb{N}$. Though $w(x,S)$ indeed has
some similar behavior as a measure like nonnegativity and finite
additivity, $w(x,S)$ is not a measure in general setting, for it
fails to satisfy countable additivity. Here is an illustrating
example:

\begin{example}\label{not measure}
Let
$s_1=\{\underbrace{1,\ldots,1}_{n-times},\underbrace{0,0,\ldots}_{otherwise}\}$,
which is obviously properly distributed. If there exists a measure
$\mu$ over $\mathbb{N}$ such that $\mu(\{n:x(n)\in S\})=w(x,S)$ for
any properly distributed sequence $x\in l^{\infty}$ and Borel subset
$S$, then
$\mu(\{1,2,\ldots,n\})=w(x,[1-\varepsilon,1+\varepsilon))=0$, where
$\varepsilon$ is a sufficiently small positive number. Similarly, it
can further be implied that for any finite subset $E$ of
$\mathbb{N}$ it always holds $\mu(E)=0$. Since $\mu$ is countably
additive and $\mathbb{N}$ is the union of pairwise disjoint finite
subsets, it follows that $\mu(\mathbb{N})=0$. However, if we set
$s_2=\{1,\ldots,1,\ldots\}$, then $s_2$ is properly distributed and
$\mu(\mathbb{N})=w(s_2,[1-\varepsilon,1+\varepsilon))=1$, which
leads to a contradiction. Thus, such measure $\mu$ over $\mathbb{N}$
doesn't exist.
\end{example}

From Example \ref{not measure}, you may have already realized that
$s_1$ and $s_2$ represent a simple but useful class of properly
distributed sequences. Hence, we naturally give the following
definition of \emph{simply distributed sequences}, which would play
the similar role as ``simple functions'' in real analysis.

\begin{definition}\label{simply distributed}
A sequence $s:=\{s(n)\}^{\infty}_{n=1}\in l^{\infty}$ is called
simply distributed if $s$ is finitely-valued with range $\{a_1,
\ldots, a_m\}$ and it holds that
\begin{align*}
w(s,a_j)&=\liminf_{n\rightarrow\infty}\frac{A(\{k:s(k+i)=a_j,k=,0,1,\ldots,n-1\})}{n}\\
&=\limsup_{n\rightarrow\infty}\frac{A(\{k:s(k+i)=a_j,k=0,1,\ldots,n-1\})}{n}
\end{align*} exists
uniformly in $i\in \mathbb{N}$ for $j=1,\ldots,m$ and $w(s,a_j)$ is
called the weight of $s$ with respect to $a_j$.
\end{definition}

Though we cannot bring our work into the framework of measure and
integration(In fact, we really tried to do so at the beginning of
our research.), we still find much common feature between them,
which suggests us to generalize the measure-integration procedure in
real analysis to obtain a \emph{formal} integral to express the
unique Banach limit of almost convergent sequence. This would
partially answer the open question of \cite{Feng2}.

\begin{theorem}\label{integral 1}
If $s\in l^{\infty}$ is a simply distributed sequence with finite
range $\{a_1, \ldots, a_m\}$, then it is almost convergent with the
unique Banach limit $L(s)=\sum_{j=1}^m a_jw(s,a_j)$.
\end{theorem}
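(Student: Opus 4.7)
The plan is to apply Lorentz's criterion (Theorem 1 in the excerpt) directly, after rewriting the Ces\`aro-type average of $s$ as a finite linear combination of the quantities that appear in Definition \ref{simply distributed}. The point is that since $s$ takes only the finitely many values $a_1,\ldots,a_m$, the sum $\sum_{t=i}^{i+n-1} s(t)$ can be partitioned according to value.

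More precisely, the first step is to observe the identity
\begin{equation*}
\frac{1}{n}\sum_{t=i}^{i+n-1} s(t) \;=\; \sum_{j=1}^{m} a_j \cdot \frac{A(\{k:s(k+i)=a_j,\,k=0,1,\ldots,n-1\})}{n},
\end{equation*}
obtained by substituting $k=t-i$ and grouping terms according to which value $a_j$ the sequence $s$ attains. This reduces the problem to controlling each of the $m$ ratios on the right-hand side.

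The second step is to invoke Definition \ref{simply distributed} for each $j=1,\ldots,m$: the ratio $A(\{k:s(k+i)=a_j,\,0\le k\le n-1\})/n$ tends to $w(s,a_j)$ as $n\to\infty$, uniformly in $i$. Since there are only finitely many summands (here $m$ is fixed and independent of $n$ and $i$), uniform convergence is preserved under the finite linear combination with coefficients $a_j$. Therefore
\begin{equation*}
\lim_{n\to\infty}\frac{1}{n}\sum_{t=i}^{i+n-1} s(t) \;=\; \sum_{j=1}^{m} a_j\, w(s,a_j)
\end{equation*}
uniformly in $i\in\mathbb{N}$.

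The third and final step is to apply Lorentz's theorem: the uniform-in-$i$ convergence of the sliding Ces\`aro averages to a common constant is exactly equivalent to almost convergence, with that constant being the common value of every Banach limit. Hence $s$ is almost convergent and $L(s)=\sum_{j=1}^{m} a_j w(s,a_j)$ for every Banach limit $L$. I do not expect any real obstacle here; the only subtle point to handle cleanly is justifying that uniformity in $i$ passes through the finite sum, which is immediate because $m$ does not depend on $n$ or $i$.
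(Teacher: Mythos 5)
Your proof is correct, but it takes a genuinely different route from the paper's. The paper deduces the result from Theorem 4 of \cite{Feng1}: it observes that $S(s)\subseteq\{a_1,\ldots,a_m\}$ is finite, that any $a_j\notin S(s)$ occurs only finitely often and so has weight zero, and then invokes that theorem to get almost convergence and the formula $L(s)=\sum_j a_j w(s,a_j)$ (implicitly identifying the weight $w(a_j)$ of the sub-limit $a_j$, defined via essential subsequences, with the distributional weight $w(s,a_j)$ of Definition \ref{simply distributed} --- an identification that for finitely-valued sequences is immediate, and which the paper only makes explicit later in Lemma \ref{two weights}). You instead bypass all of that machinery: the exact identity
\begin{equation*}
\frac{1}{n}\sum_{t=i}^{i+n-1}s(t)=\sum_{j=1}^{m}a_j\,\frac{A(\{k:s(k+i)=a_j,\;k=0,1,\ldots,n-1\})}{n}
\end{equation*}
holds because each index $k$ falls into exactly one value class, the $m$ ratios converge to $w(s,a_j)$ uniformly in $i$ by Definition \ref{simply distributed}, uniformity survives a finite linear combination, and Lorentz's criterion (Theorem 1.1) then gives both almost convergence and the value of $L(s)$ in one stroke. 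What your argument buys is self-containedness and transparency: it needs only the definition of simple distribution and Lorentz's theorem, with no appeal to sub-limits, essential subsequences, or the weight comparison of Lemma \ref{two weights}; what the paper's approach buys is brevity and a visible link between the new distributional weights and the sub-limit weights of \cite{Feng1}, which is the connection the rest of the paper is built around. Both proofs are sound.
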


\begin{proof}
Let $S(s)$ denote the set of all sub-limits of $s$. Since $s$ is
finitely-valued, we have $S(s)\subseteq\{a_1, \ldots, a_m\}$ is
finite. Moreover, if $a_j\notin S(s)$, then $a_j$ must appear finite
times in $s$ with $w(s,a_j)=0$. Hence, by Theorem 4 of \cite{Feng1},
it implies that $s$ is almost convergent and for any Banach limit
$L$, $L(s)=\sum_{j=1}^m a_jw(s,a_j)$.
\end{proof}

From Theorem \ref{integral 1}, we can see that for any simply
distributed sequence $s$, its unique Banach limit could be expressed
as formal integral $L(s)=\sum_{j=1}^m a_jw(s,a_j)$. Then it
naturally arises the question that whether it is still true for
general properly distributed sequences. To this end, we'd like to
generalize the procedure of integration in real analysis. Firstly,
let us approximate properly distributed sequences by simply
distributed sequences.

\begin{lemma}\label{approximation}
For any properly distributed element $x\in l^{\infty}$, there is a
sequence of simply distributed elements
$\{s_k\}^{\infty}_{k=1}\subseteq l^{\infty}$ such that
$\lim_{k\rightarrow \infty} s_k=x$ under the norm
$\|\cdot\|_{\infty}$ in $l^{\infty}$.
\end{lemma}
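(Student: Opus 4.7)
The plan is to mimic the classical real-analysis construction that approximates a bounded measurable function uniformly by simple functions. Fix $M:=\|x\|_{\infty}$. For each integer $k\geq 1$, partition the compact interval $[-M,M]$ into finitely many pairwise disjoint half-open intervals $S_1^{(k)},\ldots,S_{m_k}^{(k)}$ of length at most $1/k$; these are Borel subsets of $[-M,M]$. Choose a representative $a_j^{(k)}\in S_j^{(k)}$ for each $j$, and define the candidate approximating sequence by
\[
s_k(n):=a_j^{(k)}\quad\text{whenever}\quad x(n)\in S_j^{(k)}.
\]
This is well-defined and bounded because $\{S_j^{(k)}\}_{j=1}^{m_k}$ partitions the range of $x$, so $s_k\in l^{\infty}$ with finite range contained in $\{a_1^{(k)},\ldots,a_{m_k}^{(k)}\}$.

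Next I would verify the three required properties in turn. First, uniform norm convergence is immediate: since $a_j^{(k)}$ and $x(n)$ both lie in $S_j^{(k)}$ whenever $s_k(n)=a_j^{(k)}$, and $S_j^{(k)}$ has diameter at most $1/k$, we obtain $|s_k(n)-x(n)|\leq 1/k$ for every $n$, so $\|s_k-x\|_{\infty}\to 0$.

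Second, I would check that each $s_k$ is simply distributed in the sense of Definition \ref{simply distributed}. The key observation is the set identity
\[
\{k'\,:\,s_k(k'+i)=a_j^{(k)},\,k'=0,1,\ldots,n-1\}=\{k'\,:\,x(k'+i)\in S_j^{(k)},\,k'=0,1,\ldots,n-1\},
\]
which is a direct consequence of the construction. Because $x$ is properly distributed with respect to the Borel set $S_j^{(k)}$, the ratio on the right converges, as $n\to\infty$, to $w(x,S_j^{(k)})$ uniformly in $i$. Transferring this through the identity shows that $w(s_k,a_j^{(k)})$ exists uniformly in $i$ and equals $w(x,S_j^{(k)})$; thus $s_k$ is simply distributed.

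There is no real obstacle here — the argument is essentially the standard approximation-by-simple-functions procedure transported into the ``properly distributed'' setting. The only point that needs a moment's care is ensuring the representatives $a_j^{(k)}$ are chosen inside $S_j^{(k)}$ (so the uniform estimate $|s_k(n)-x(n)|\leq 1/k$ holds) and observing that the uniform-in-$i$ hypothesis in the definition of properly distributed is precisely what is needed to hand back the uniform-in-$i$ condition required in the definition of simply distributed.
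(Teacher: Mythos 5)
Your proposal is correct and follows essentially the same route as the paper's proof: partition $[-\|x\|_{\infty},\|x\|_{\infty}]$ into half-open intervals of mesh at most $1/k$, collapse $x$ onto a representative of each interval, and use the proper-distribution hypothesis on these Borel intervals to see that the approximant is simply distributed while the mesh bound gives $\|s_k-x\|_{\infty}\leq 1/k$. Your version is in fact slightly more careful than the paper's, since you make explicit the set identity that transfers the uniform-in-$i$ limit from $x$ to $s_k$, a step the paper dismisses with ``it follows easily.''
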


\begin{proof}
For $k\in \mathbb{N}$, there is a partition
$$T_k:-\|x\|_{\infty}=a_0<\ldots<a_{m_k}=\|x\|_{\infty}$$ of
$[-\|x\|_{\infty},\|x\|_{\infty}]$ such that $\|T_k\|<1/k$. Define
\begin{equation*}
s_k(n)=
\begin{cases}
a_0, & \text{if $a_0\leq x(n)< a_1$},\\
\cdots & \cdots,\\
a_{m_k-1}, & \text{if $a_{m_k-1}\leq x(n)< a_{m_k}$.}
\end{cases}
n=1,2,3,\ldots
\end{equation*}
Since $x$ is properly distributed, it follows easily that each $s_k$
is simply distributed. According to the above construction, it is
obvious that $\|s_k-x\|_{\infty}<1/k$. Thus
$\lim_{k\rightarrow\infty} s_k=x$.
\end{proof}

\begin{theorem}
If $x\in l^{\infty}$ is any properly distributed sequence, then $x$
is almost convergent. And if $\{s_k\}_{k=1}^{\infty}$ is any
sequence of simply distributed sequences convergent to $x$ under the
$\|\cdot\|_{\infty}$ norm, for any Banach limit $L$, it always holds
that $\lim_{k\rightarrow\infty}L(s_k)=L(x)$.
\end{theorem}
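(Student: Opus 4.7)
The plan is to combine Lemma \ref{approximation} with Theorem \ref{integral 1} and the fact that every Banach limit $L$ has norm $1$, so that $L$ is $\|\cdot\|_\infty$-continuous on $l^\infty$. The guiding idea is that almost convergence is a closed property under norm limits: if each $s_k$ is almost convergent and $s_k \to x$ in $\|\cdot\|_\infty$, then any two Banach limits must agree on $x$ because they agree on every $s_k$.

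First I would invoke Lemma \ref{approximation} to produce simply distributed sequences $s_k \in l^\infty$ with $\|s_k - x\|_\infty < 1/k$. By Theorem \ref{integral 1}, each $s_k$ is almost convergent; let $\ell_k$ denote its common Banach limit value. Next, for an arbitrary Banach limit $L$, I would use property (d), namely $\|L\| = 1$, to estimate
\begin{equation*}
|L(x) - \ell_k| \;=\; |L(x) - L(s_k)| \;=\; |L(x - s_k)| \;\leq\; \|x - s_k\|_\infty \;<\; \frac{1}{k}.
\end{equation*}
Since this bound is independent of the choice of $L$, it shows simultaneously that (i) the numerical sequence $\{\ell_k\}$ is Cauchy in $\mathbb{R}$ (by comparing two indices $k$ and $j$ through the same $L$), so it has a limit $\ell$, and (ii) $L(x) = \ell$ for every Banach limit $L$. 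Hence $x$ is almost convergent with $L(x) = \lim_{k\to\infty} \ell_k$.

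For the second assertion, let $\{s_k\}_{k=1}^\infty$ be \emph{any} sequence of simply distributed sequences with $\|s_k - x\|_\infty \to 0$. Each $s_k$ is almost convergent by Theorem \ref{integral 1}, and for any Banach limit $L$, the same norm estimate gives
\begin{equation*}
|L(s_k) - L(x)| \;=\; |L(s_k - x)| \;\leq\; \|s_k - x\|_\infty \;\longrightarrow\; 0,
\end{equation*}
so $\lim_{k\to\infty} L(s_k) = L(x)$.

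There is no real obstacle here beyond organising these two applications of continuity; the substantive work has already been done in Theorem \ref{integral 1} (which pins down each $L(s_k)$ as a single number) and Lemma \ref{approximation} (which guarantees that such approximating $s_k$ exist). The one point worth stressing in the write-up is that the bound $|L(x) - L(s_k)| \leq 1/k$ is uniform in $L$, which is precisely what converts ``each $s_k$ is almost convergent'' into ``$x$ is almost convergent.''
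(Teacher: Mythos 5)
Your proposal is correct and follows essentially the same route as the paper: approximate $x$ by simply distributed $s_k$ via Lemma \ref{approximation}, use Theorem \ref{integral 1} to pin down each $L(s_k)$ as a single value $\ell_k$ independent of $L$, and then use the norm-continuity of $L$ (property (d)) to pass to the limit and conclude $L(x)=\lim_k \ell_k$ for every Banach limit $L$. Your write-up is somewhat more explicit than the paper's (making the uniform bound and the Cauchy argument for $\{\ell_k\}$ visible), but the substance is identical.
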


\begin{proof}
For any Banach limit $L$, since $L$ is a bounded linear functional
on $l^{\infty}$ and $\lim_{k\rightarrow\infty}s_k=x$, it follows
that $\lim_{k\rightarrow\infty}L(s_k)=L(x)$. By Theorem
\ref{integral 1}, the value of each $L(s_k)$ is independent of $L$,
thus so is $L(x)$. We conclude that  $x$ is almost convergent and
the unique Banach limit is $\lim_{k\rightarrow\infty}L(s_k)$.
\end{proof}

Now we want to use the new theory to review the work of
\cite{Feng1}, which will be shown to be a special case in our
framework.

\begin{lemma}\label{two weights}
Let $x:=\{x(n)\}_{n=1}^{\infty}\in l^{\infty}$. Suppose $a$ is an
isolated sub-limit of $x$, and there exists $\varepsilon_0>0$ such
that $(a-\varepsilon_0,a+\varepsilon_0)\bigcap S(x)=\{a\}$. Then for
any $0<\varepsilon\leq\varepsilon_0$,
$w(x,[a-\varepsilon,a+\varepsilon))$ exists if and only if $w(a)$
does. Moreover, if they both exist, they are equal.
\end{lemma}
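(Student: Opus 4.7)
The plan is to reduce both weights, $w(a)$ and $w(x,[a-\varepsilon,a+\varepsilon))$, to counts over a single explicit index set up to a finite discrepancy. Since $a$ is an isolated sub-limit, Proposition \ref{correction} yields an essential subsequence $\{x(n_k)\}_{k=1}^{\infty}$ of $a$, and by Theorem \ref{essential subsequence} its upper and lower weights equal $w^u(a)$ and $w_l(a)$ respectively. Writing
\[
N := \{n_k : k \in \mathbb{N}\}, \qquad M := \{j \in \mathbb{N} : x(j) \in [a-\varepsilon,a+\varepsilon)\},
\]
and using the change of variable $j=k+i$ in Definition \ref{properly distributed}, both of the relevant families of ratios take the uniform shape $A(N \cap [i,i+n-1])/n$ and $A(M \cap [i,i+n-1])/n$ respectively. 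The lemma will follow once I show that $|N \triangle M| < \infty$.

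The containment $N \setminus M$ is finite because $x(n_k) \to a$ forces $x(n_k) \in (a-\varepsilon,a+\varepsilon) \subseteq [a-\varepsilon,a+\varepsilon)$ for all but finitely many $k$. For $M \setminus N$ I argue by contradiction: if $M \setminus N$ were infinite, the bounded infinite set $\{x(j) : j \in M \setminus N\} \subseteq [a-\varepsilon,a+\varepsilon]$ would, by Bolzano--Weierstrass, admit a convergent subsequence with limit $b \in [a-\varepsilon,a+\varepsilon] \subseteq (a-\varepsilon_0,a+\varepsilon_0)$. Since $b \in S(x)$ and $(a-\varepsilon_0,a+\varepsilon_0) \cap S(x) = \{a\}$, necessarily $b = a$; but this yields a subsequence of $x$ converging to $a$ whose indices lie infinitely often outside $N$, contradicting the defining property of the essential subsequence of $a$.

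With $|N \triangle M|$ finite, the estimate
\[
\left| \frac{A(N \cap [i,i+n-1])}{n} - \frac{A(M \cap [i,i+n-1])}{n} \right| \leq \frac{|N \triangle M|}{n}
\]
holds uniformly in $i$. Passing to $\limsup_n \sup_i$ and $\liminf_n \inf_i$ gives $w^u(a) = w^u(x,[a-\varepsilon,a+\varepsilon))$ and $w_l(a) = w_l(x,[a-\varepsilon,a+\varepsilon))$ (with the obvious meanings on the right). Consequently $w(a)$ exists iff $w(x,[a-\varepsilon,a+\varepsilon))$ exists, and when both exist they coincide.

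The main obstacle is the Bolzano--Weierstrass step for $M \setminus N$; this is exactly where the hypothesis $\varepsilon \leq \varepsilon_0$ is invoked, since one needs the interval $[a-\varepsilon,a+\varepsilon]$ to sit inside a neighborhood on which $a$ is the unique sub-limit, so that every stray accumulation point is forced to equal $a$. Everything else is a routine density bookkeeping computation.
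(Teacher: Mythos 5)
Your proof is correct and follows essentially the same route as the paper's: both rest on the observation that, since $a$ is the only sub-limit near the interval, the indices of the terms falling in $[a-\varepsilon,a+\varepsilon)$ agree with the indices of an essential subsequence of $a$ up to a finite set, so the two counting ratios coincide (the paper makes them literally identical by taking the interval terms themselves as the essential subsequence, whereas you compare an arbitrary essential subsequence through a finite symmetric difference and an $O(1/n)$ estimate). The only blemish is that your inclusion $[a-\varepsilon,a+\varepsilon]\subseteq(a-\varepsilon_0,a+\varepsilon_0)$ fails when $\varepsilon=\varepsilon_0$, so the Bolzano--Weierstrass step only excludes accumulation at interior points of the $\varepsilon_0$-neighborhood; this boundary case is glossed over in exactly the same way in the paper's own proof, so it does not distinguish your argument from theirs.
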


\begin{proof}
Like Proposition \ref{correction}, for any
$0<\varepsilon\leq\varepsilon_0$, let $\{x(n_k)\}$ denote all the
terms of $x$ that lying in $[a-\varepsilon,a+\varepsilon)$. Then,
similarly, it is easy to show that $\{x(n_k)\}$ is an essential
subsequence of $a$. And, for any $n,i\in \mathbb{N}$, we have
\begin{align*}
&\frac{A(\{j:a-\varepsilon\leq
x(i+j)<a+\varepsilon,j=0,1,\ldots,n-1\})}{n}\\
=&\frac{A(\{k:i\leq n_k \leq i+n-1\})}{n}.
\end{align*}
Consequently,
\begin{align*}
&\limsup_{n\rightarrow\infty}\frac{A(\{j:a-\varepsilon\leq
x(i+j)<a+\varepsilon,j=0,1,\ldots,n-1\})}{n}\\
=&\limsup_{n\rightarrow\infty}\frac{A(\{k:i\leq n_k \leq
i+n-1\})}{n},
\end{align*}
and
\begin{align*}
&\liminf_{n\rightarrow\infty}\frac{A(\{j:a-\varepsilon\leq
x(i+j)<a+\varepsilon,j=0,1,\ldots,n-1\})}{n}\\
=&\liminf_{n\rightarrow\infty}\frac{A(\{k:i\leq n_k \leq
i+n-1\})}{n}.
\end{align*}
Now it is clear that $w(x,[a-\varepsilon,a+\varepsilon))$ exists if
and only if $w(a)$ does. And, if they both exist, they are equal.
\end{proof}

Now it's time to include Theorem 4(\cite{Feng1}) into our framework.
\begin{theorem}
Suppose $x:=\{x(n)\}_{n=1}^{\infty}\in l^{\infty}$ and
$S(x)=\{a_1,a_2,\ldots,a_m\}$ is a finite set, where $a_i\neq a_j$
if $i\neq j$. If $w(a_j)$ exists for each $j$, then $x$ is properly
distributed.
\end{theorem}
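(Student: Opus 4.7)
My plan is to use Lemma~\ref{two weights} to control $w(x,\cdot)$ on thin intervals around each $a_j$ and then patch those estimates together, exploiting that the finiteness of $S(x)$ forces all but finitely many terms of $x$ to cluster near one of the $a_j$'s. Concretely, since each $a_j$ is an isolated sub-limit I pick $\varepsilon_0>0$ so that the half-open intervals $I_j:=[a_j-\varepsilon_0,a_j+\varepsilon_0)$ are pairwise disjoint and satisfy $I_j\cap S(x)=\{a_j\}$. Lemma~\ref{two weights} then gives $w(x,[a_j-\varepsilon,a_j+\varepsilon))=w(a_j)$ uniformly in $i$ for every $\varepsilon\in(0,\varepsilon_0]$.

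A Bolzano--Weierstrass argument shows that only finitely many indices $n$ have $x(n)\notin\bigcup_j I_j$: otherwise an infinite bounded subsequence confined to the complement of $\bigcup_j I_j$ would have a cluster point at distance at least $\varepsilon_0$ from every $a_j$, contradicting $S(x)=\{a_1,\ldots,a_m\}$. Hence, up to a bounded additive error independent of $i$ and $n$,
$$A\bigl(\{k:x(k+i)\in S,\;0\le k\le n-1\}\bigr)=\sum_{j=1}^m A\bigl(\{k:x(k+i)\in S\cap I_j,\;0\le k\le n-1\}\bigr),$$
so the problem reduces to analysing each of the $m$ summands. For this I plan to note that the essential subsequence of $a_j$ consists of exactly the terms of $x$ lying in $I_j$ and converges to $a_j$; hence, for any $\delta\in(0,\varepsilon_0]$, all but finitely many of those terms lie in $[a_j-\delta,a_j+\delta)$. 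Choosing $\delta$ small enough that this thinner interval is either entirely inside $S$ or entirely disjoint from $S$, Lemma~\ref{two weights} forces the $j$-th summand divided by $n$ to converge uniformly in $i$ to $w(a_j)$ in the first case and to $0$ in the second. Summing over $j$ then gives $w(x,S)=\sum_{a_j\in S}w(a_j)$ uniformly in $i$, proving that $x$ is properly distributed.

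The step I expect to require the most care is the choice of the thin interval $[a_j-\delta,a_j+\delta)$: this must be either entirely inside or entirely disjoint from $S$, which is immediate when $a_j\notin\partial S$ but fails when $a_j$ lies on the boundary of $S$. In that case one must exploit that the essential subsequence of $a_j$ accumulates \emph{only} at the single point $a_j$, so the asymptotic proportion of its terms lying in $S$ should be governed by membership of $a_j$ in $S$ rather than by the fine structure of $S$ near $a_j$; making this precise will likely require revisiting the counting in the proof of Lemma~\ref{two weights} and applying it directly to the Borel set $S\cap I_j$ instead of to an interval.
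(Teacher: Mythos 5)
Your skeleton --- only finitely many terms of $x$ fall outside the union of the disjoint intervals $I_j$, so the count for a set $S$ splits into the counts over $S\cap I_j$, each controlled near the single sub-limit $a_j$ by Lemma~\ref{two weights} --- is exactly the paper's argument, except that the paper silently tests the condition of Definition~\ref{properly distributed} only on half-open intervals $[c,d)$ and never touches general Borel sets. The difficulty you flag at the end (the case $a_j\in\partial S$) is therefore the entire remaining content, and your proposed resolution is not merely delicate but false: the asymptotic proportion of the terms of the essential subsequence of $a_j$ lying in $S$ is \emph{not} governed by whether $a_j\in S$; it depends on the fine structure of $S$ among the actual values $x(n)$ near $a_j$, and it need not converge at all. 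Take $x(n)=1/n$, so that $S(x)=\{0\}$ and $w(0)=1$ and the hypotheses of the theorem hold. Let $E=\bigcup_k[2^{2k},2^{2k+1})$, a set of integers with no density, and put $S=\{1/n:n\in E\}$, a countable (hence Borel) subset of $[-\|x\|_\infty,\|x\|_\infty]$. Then $A(\{k:x(k+i)\in S,\ k=0,\ldots,n-1\})=A(E\cap[i,i+n-1])$, whose normalized count oscillates between roughly $1/3$ and $2/3$ already for $i=1$, so $w(x,S)$ does not exist and $x$ is not properly distributed in the sense of Definition~\ref{properly distributed}. No argument can close this gap: the theorem is false as literally stated when ``properly distributed'' quantifies over all Borel sets.

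What survives, and what the paper actually proves, is the statement for intervals $[c,d)$ whose endpoints avoid $S(x)$: there your plan goes through verbatim, since for small $\delta$ each $[a_j-\delta,a_j+\delta)$ is either contained in or disjoint from $[c,d)$, and Lemma~\ref{two weights} together with the finitely-many-stragglers observation gives $w(x,[c,d))=\sum_{a_j\in[c,d)}w(a_j)$ uniformly in $i$. (Even the interval case breaks when some $a_j$ equals the endpoint $c$: a sequence with $x(n)=a_j+1/n$ for $n\in E$ and $x(n)=a_j-1/n$ otherwise has $w(a_j)=1$ but no $w(x,[a_j,d))$; the paper's proof overlooks this as well.) The honest conclusion is that Definition~\ref{properly distributed} must first be weakened --- say to Borel sets whose boundary is disjoint from $S(x)$, or simply to such intervals --- before your method, or the paper's, can prove the theorem.
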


\begin{proof}
For any interval $[c,d)$, if $[c,d)\bigcap
\{a_1,a_2,\ldots,a_m\}=\emptyset$, there would be at most finite
terms in $[c,d)$, so
\begin{align*}
w(x,[c,d))&=\liminf_{n\rightarrow \infty}\frac{A(\{k:x(k+i)\in
[c,d),k=0,1,\ldots,n-1\})}{n}\\
&=\limsup_{n\rightarrow \infty}\frac{A(\{k:x(k+i)\in [c,
d),k=0,1,\ldots,n-1\})}{n}\\
&=0
\end{align*}
exists uniformly in $i\in \mathbb{N}$. Otherwise, there are some
$a_j$s in $[c,d)$. Without loss of generality, we can assume only
$a_j$ lying $[c,d)$. In fact, if there are more than one such $a_j$,
we can decompose $[c,d)$ into disjoint subintervals such that each
contains only one $a_j$. From Lemma \ref{two weights}, since $w(a_j
)$ exists, we also have $w(x,[c,d))$ exists, and
$w(x,[c,d))=w(a_j)$. Thus we have proved that $x$ is properly
distributed.
\end{proof}

Moreover, we can reobtain the unique Banach limit of $x$ above,
using the approximation method by simply distributed sequences.

\begin{cor}
Suppose $x:=\{x(n)\}_{n=1}^{\infty}\in l^{\infty}$ and
$S(x)=\{a_1,a_2,\ldots,a_m\}$ is a finite set, where $a_i\neq a_j$
if $i\neq j$. If $w(a_j)$ exists for each $j$, then $x$ is almost
convergent, with the unique Banach limit $L(x)=\sum_{j=1}^m
a_jw(a_j)$ for any Banach limit $L$.
\end{cor}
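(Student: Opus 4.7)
The plan is to deduce the corollary directly by combining the immediately preceding theorem with the main theorem on properly distributed sequences, and to make the resulting formal integral concrete by choosing a convenient sequence of simply distributed approximations.

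First, I would invoke the preceding theorem to conclude that $x$ is properly distributed, and then invoke the theorem that every properly distributed sequence is almost convergent and satisfies $L(x)=\lim_{k\to\infty}L(s_k)$ for \emph{any} sequence of simply distributed approximations $\{s_k\}\subseteq l^\infty$ with $s_k\to x$ in $\|\cdot\|_\infty$. At this point almost convergence is already established; the remaining task is to identify the numerical value of the limit with $\sum_{j=1}^m a_j w(a_j)$.

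Next I would construct the approximating sequence carefully. Since $S(x)=\{a_1,\dots,a_m\}$ is finite, every $a_j$ is isolated, so I can fix $\varepsilon_0>0$ smaller than half the minimum pairwise distance of the $a_j$'s and with $(a_j-\varepsilon_0,a_j+\varepsilon_0)\cap S(x)=\{a_j\}$. For each $k$ with $1/k<\varepsilon_0$, I would choose a partition $T_k:-\|x\|_\infty=b_0<\cdots<b_{m_k}=\|x\|_\infty$ of mesh less than $1/k$ arranged so that each $a_j$ is an interior point of exactly one subinterval $[b_{i_j(k)},b_{i_j(k)+1})$ and this subinterval meets $S(x)$ only in $a_j$. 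Define $s_k$ as in Lemma~\ref{approximation}; then $s_k$ is simply distributed, $\|s_k-x\|_\infty<1/k$, and $s_k$ takes the value $b_{i_j(k)}$ precisely on $\{n:x(n)\in[b_{i_j(k)},b_{i_j(k)+1})\}$.

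Now I would evaluate $L(s_k)$ via Theorem~\ref{integral 1}: $L(s_k)=\sum_{i=0}^{m_k-1} b_i\, w(s_k,b_i)$. For any subinterval not containing some $a_j$, only finitely many terms of $x$ lie in it (since $S(x)$ is finite and therefore the complement of any union of neighborhoods of the $a_j$'s contains no sub-limit), so the corresponding weight vanishes. For the subinterval $[b_{i_j(k)},b_{i_j(k)+1})$ around $a_j$, essentially the same argument as in Lemma~\ref{two weights}, but applied to the (not necessarily symmetric) small interval containing only $a_j$ from $S(x)$, yields $w(s_k,b_{i_j(k)})=w(a_j)$. Hence $L(s_k)=\sum_{j=1}^m b_{i_j(k)}\,w(a_j)$, and letting $k\to\infty$, $b_{i_j(k)}\to a_j$ (since $|b_{i_j(k)}-a_j|<1/k$), so $L(x)=\lim_k L(s_k)=\sum_{j=1}^m a_j w(a_j)$.

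The main obstacle is the bookkeeping in the middle step: verifying that in the partition one really may assume each $a_j$ is an interior point of its subinterval and that this subinterval contains no other sub-limit, and then extending Lemma~\ref{two weights} from symmetric intervals $[a-\varepsilon,a+\varepsilon)$ to the one-sided neighborhoods $[b_{i_j(k)},b_{i_j(k)+1})$ actually produced by the partition. Both are straightforward consequences of the isolation of the $a_j$'s and the definition of sub-limit, but writing them out cleanly is where the care is required.
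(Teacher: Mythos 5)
Your proposal is correct and follows essentially the same route as the paper: approximate $x$ by simply distributed sequences, evaluate each $L(s_k)$ via Theorem~\ref{integral 1} together with Lemma~\ref{two weights}, and pass to the limit using the continuity of the Banach limit. The only difference is the choice of $s_k$: the paper sets $s_k(n)=a_j$ whenever $a_j-1/k\le x(n)<a_j+1/k$ and $s_k(n)=x(n)$ otherwise, so that $L(s_k)=\sum_{j=1}^m a_jw(a_j)$ exactly and Lemma~\ref{two weights} applies verbatim to the symmetric intervals, whereas your generic partition requires extending that lemma to asymmetric subintervals, taking a limit of the coefficients $b_{i_j(k)}\to a_j$, and (a small point) handling a possible sub-limit at $\pm\|x\|_{\infty}$, which cannot be an interior point of any half-open partition subinterval.
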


\begin{proof}
For any sufficiently big $k\in \mathbb{N}$, define
\begin{equation*}
s_k(n)=
\begin{cases}
a_j, & \text{if $a_j-1/k\leq x(n)< a_j+1/k$,}\\
x(n), & \text{otherwise.}
\end{cases}
j=1,\ldots,m; n\in \mathbb{N}.
\end{equation*}
It is easy to see that each $s_k$ is a simply distributed sequence
with only $w(s_k,a_j)\neq 0$, and $L(s_k)=\sum_{j=1}^m
a_jw(s_k,[a_j-1/k,a_j+1/k))=\sum_{j=1}^m a_jw(a_j)$. From the
construction of $\{s_k\}_{k=1}^{\infty}$,
$\lim_{k\rightarrow\infty}s_k=x$ under the $\|\cdot\|_{\infty}$
norm. Then it follows that
$L(x)=\lim_{k\rightarrow\infty}L(s_k)=\sum_{j=1}^m a_jw(a_j)$.
\end{proof}

In Theorem 5 and 6 of \cite{Feng1}, sequences whose sub-limit sets
have limit points are considered. In order to keep the form
$L(x)=\sum_{a\in S(x)}aw(a)$, the authors made a great effort to
give a complex definition for the weight of limit points of $S(x)$.
Now, from our distribution viewpoint, it is very easy to understand
those complex formulae. Let us take Theorem 5 \cite{Feng1} for
example, Theorem 6 \cite{Feng1} is treated in a similar way locally
at each limit point of $S(x)$.

\begin{theorem}
Suppose $x:=\{x(n)\}_{n=1}^{\infty}\in l^{\infty}$ and $S(x)$ is
infinite but countable and has a unique limit point $p$, that is
$S'(x)=\{p\}$. If, furthermore, $w(a)$ exists for all $a\in S(x)$
and $a\neq p$, then $x$ is properly distributed, and for any Banach
limit $L$, $L(x)=\sum_{a\in S(x)}aw(a)$, where $w(p)=1-\sum_{p\neq a
\in S(x)}w(a)$.
\end{theorem}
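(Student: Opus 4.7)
The plan is to approximate $x$ in $l^{\infty}$ by simply distributed sequences $\{s_k\}$, read off $L(x)$ from Theorem \ref{integral 1} together with continuity of any Banach limit, and verify proper distribution by a reduction to the finite sub-limit case treated earlier in Section 2. The localization of sub-limits (all isolated except for accumulation at $p$) is what makes such an approximation possible.

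Enumerate $S(x)\setminus\{p\}=\{a_j\}_{j\geq 1}$. Since $p$ is the unique limit point of $S(x)$, every $a_j$ is isolated and $a_j\to p$; pick $r_j>0$ with $(a_j-r_j,a_j+r_j)\cap S(x)=\{a_j\}$. Lemma \ref{two weights} then gives $w(x,[a_j-r,a_j+r))=w(a_j)$ for all $0<r\leq r_j$. For any finite $J$, pairwise disjoint intervals of this form and finite additivity yield $\sum_{j\leq J}w(a_j)\leq w(x,[-\|x\|_{\infty},\|x\|_{\infty}])=1$, so $\sum_j w(a_j)$ converges and $w(p):=1-\sum_{p\neq a\in S(x)}w(a)\in[0,1]$ is well defined.

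For each $k$, let $J_k:=\{j:|a_j-p|>1/(2k)\}$ (a finite set since $a_j\to p$), pick $\varepsilon_j^{(k)}\in(0,\min(r_j,1/k)]$ for $j\in J_k$ so that the intervals $I_j^{(k)}:=[a_j-\varepsilon_j^{(k)},a_j+\varepsilon_j^{(k)})$ are pairwise disjoint, and set $P_k:=[p-1/k,p+1/k]$. Define
\[
s_k(n)=\begin{cases} a_j, & x(n)\in I_j^{(k)},\; j\in J_k,\\ p, & x(n)\in P_k\setminus\bigcup_{j\in J_k}I_j^{(k)},\\ x(n), & \text{otherwise.}\end{cases}
\]
A compactness argument shows the ``otherwise'' case occurs only finitely many times: any accumulation point of such $x(n)$ would be a sub-limit $y\in S(x)$, but $y=p$ eventually forces $x(n_l)\in P_k$, $y=a_j$ with $j\in J_k$ eventually forces $x(n_l)\in I_j^{(k)}$, and $y=a_j$ with $j\notin J_k$ means $|a_j-p|\leq 1/(2k)$ so eventually $|x(n_l)-p|<1/k$ and $x(n_l)\in P_k$; each option contradicts ``otherwise''. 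Hence $s_k$ is finitely valued; Lemma \ref{two weights} gives $w(s_k,a_j)=w(x,I_j^{(k)})=w(a_j)$ for $j\in J_k$, the finitely many exceptional values have weight zero, and $w(s_k,p)=1-\sum_{j\in J_k}w(a_j)$ by the total-weight-one constraint, so $s_k$ is simply distributed. Theorem \ref{integral 1} delivers $L(s_k)=p(1-\sum_{j\in J_k}w(a_j))+\sum_{j\in J_k}a_j w(a_j)$, and since $\|s_k-x\|_{\infty}\leq 1/k$, continuity of $L$ gives $L(x)=\lim_k L(s_k)=p\cdot w(p)+\sum_j a_j w(a_j)=\sum_{a\in S(x)}a\,w(a)$; independence of $L$ from each $L(s_k)$ forces the same for $x$, so $x$ is almost convergent.

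Proper distribution of $x$ remains, and I regard this as the main obstacle. Imitating the finite sub-limit theorem preceding this one, given any Borel $S\subseteq[-\|x\|_{\infty},\|x\|_{\infty}]$ and $\eta>0$, decompose $S$ as $S\cap (p-\eta,p+\eta)^c$ (containing only finitely many $a_j$, whose contributions to $w(x,\cdot)$ exist uniformly in $i$ by Lemma \ref{two weights} and finite additivity) and $S\cap (p-\eta,p+\eta)$ (handled by complementation from the previous piece and $w(x,[-\|x\|_{\infty},\|x\|_{\infty}])=1$). The delicate step is interchanging the $\eta\to 0$ approximation with the uniform-in-$i$ $\liminf$ and $\limsup$ defining $w(x,S)$; this is governed by the tail bound $\sum_{j>J}w(a_j)\to 0$ from the first step, which forces residual variation near $p$ to be negligible in an $i$-uniform sense.
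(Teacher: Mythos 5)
Your construction of the $s_k$, the verification that each is simply distributed, and the passage to the limit giving $L(x)=\sum_{a\in S(x)}a\,w(a)$ are correct, and this is essentially the paper's own route carried out with the care the paper omits: the paper's $s_k$ uses possibly overlapping intervals of fixed radius $1/k$ around \emph{every} $a_j$, never checks that the ``otherwise'' case is harmless, and takes for granted that $\sum_{p\neq a\in S(x)}w(a)$ converges so that $w(p)$ is well defined. Your finite index set $J_k$, the shrunken disjoint intervals $I_j^{(k)}$, the compactness argument for the finiteness of the exceptional set, and the monotonicity bound $\sum_{j\le J}w(a_j)\le 1$ supply exactly the missing justifications, so this half of the proposal should be accepted as it stands (and is stronger than what is printed).

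The gap is in the part you yourself flagged, and it cannot be closed: the assertion that $x$ is properly distributed in the sense of Definition \ref{properly distributed} (all Borel $S$) does not follow from the hypotheses, and is in fact false. Complementation only yields the \emph{total} weight of $\{n:x(n)\in(p-\eta,p+\eta)\}$, namely $1-\sum_{a_j\notin(p-\eta,p+\eta)}w(a_j)$; it gives no control over how that mass is shared between $S\cap(p-\eta,p+\eta)$ and its relative complement. The tail bound you invoke does not rescue this, because as $\eta\to 0$ the residual mass tends to $w(p)$, which may well be positive, and the hypotheses say nothing about how a positive mass accumulating at $p$ is distributed over Borel subsets of small neighbourhoods of $p$. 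Concretely: write the even integers as the disjoint union of the progressions $C_j=\{2^{j}(2m-1):m\ge 1\}$, $j\ge 1$, and set $x(n)=1/j$ for $n\in C_j$; choose $1=N_0<N_1<N_2<\cdots$ with $N_{k+1}/N_k\to\infty$ and, for odd $n$, set $x(n)=0$ if $n\in[N_{2k},N_{2k+1})$ for some $k$ and $x(n)=-1/n$ otherwise. Then $S(x)=\{0\}\cup\{1/j:j\ge1\}$, $S'(x)=\{0\}=\{p\}$, and $w(1/j)=2^{-j-1}$ exists uniformly, so all hypotheses hold; yet the density of $\{n:x(n)=0\}$ in $[1,N)$ oscillates between $0$ and $1/2$, so $w(x,\{0\})$ does not exist and $x$ is not properly distributed. (A similar one-sided failure at an isolated $a_j$, e.g.\ for $S=[a_j,a_j+\varepsilon)$, already afflicts the preceding finite-$S(x)$ theorem; Lemma \ref{two weights} only treats intervals containing $a_j$ in their interior.) The paper's own proof has the same defect --- it never addresses proper distribution of $x$ at all --- so the honest conclusion obtainable by your (and the paper's) method is almost convergence together with the stated value of $L(x)$, with the proper-distribution claim either dropped or restricted to sets $S$ whose boundary avoids the closure of $S(x)$.
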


\begin{proof}
For any sufficiently big $k\in \mathbb{N}$, define
\begin{equation*}
s_k(n)=
\begin{cases}
p, &  \text{if $p-1/k\leq x(n)\leq p+1/k$,}\\
a_j, & \text{if $a_j-1/k\leq x(n)< a_j+1/k$, and $a_j\notin [p-1/k,p+1/k)$,}\\
x(n), & \text{otherwise.}
\end{cases}
\end{equation*}

Since there are only finite $a_j\notin [p-1/k,p+1/k)$, each $s_k$ is
properly distributed and $\lim_{k\rightarrow\infty}s_k=x$. Moreover,
from Lemma \ref{two weights}, we have
$$L(s_k)=\sum_{a_j\notin
[p-1/k,p+1/k)}a_jw(a_j)+p(1-\sum_{a_j\notin [p-1/k,p+1/k)}w(a_j)).$$
Let $k\rightarrow\infty$, it follows that
$L(x)=\lim_{k\rightarrow\infty}L(s_k)=\sum_{a\in S(x)}aw(a)$, where
$w(p)=1-\sum_{p\neq a \in S(x)}w(a)$.
\end{proof}

\bibliographystyle{amsplain}

\end{document}